\newcommand\blfootnote[1]{%
 \begingroup
 \renewcommand\thefootnote{}\footnote{#1}%
 \addtocounter{footnote}{-1}%
 \endgroup
}
\newcommand{\smodul}[1]{\,\, (\textrm{mod }#1) }
\newtheorem{theorem}{Theorem}
\newtheorem{corollary}{Corollary}
\date{}
\begin{document}

\title{Trees whose even-degree vertices induce a path are antimagic}

\author{Antoni Lozano\thanks{Computer Science Department, Universitat Polit\`ecnica de Catalunya, Spain, {\tt antoni@cs.upc.edu}.} \and
Merc\`e Mora\thanks{Mathematics Department, Universitat Polit\`ecnica de Catalunya, Spain, {\tt merce.mora@upc.edu}.}
\and Carlos Seara\thanks{Mathematics Department, Universitat Polit\`ecnica de Catalunya, Spain, {\tt carlos.seara@upc.edu}.}
\and Joaqu\'in Tey\thanks{Math. Department, Universidad Aut\'onoma Metropolitana-Iztapalapa, M\'exico, {\tt jtey@xanum.uam.mx}}
}

\maketitle

\blfootnote{
	\begin{minipage}[l]{0.3\textwidth} \includegraphics[trim=10cm 6cm 10cm 5cm,clip,scale=0.15]{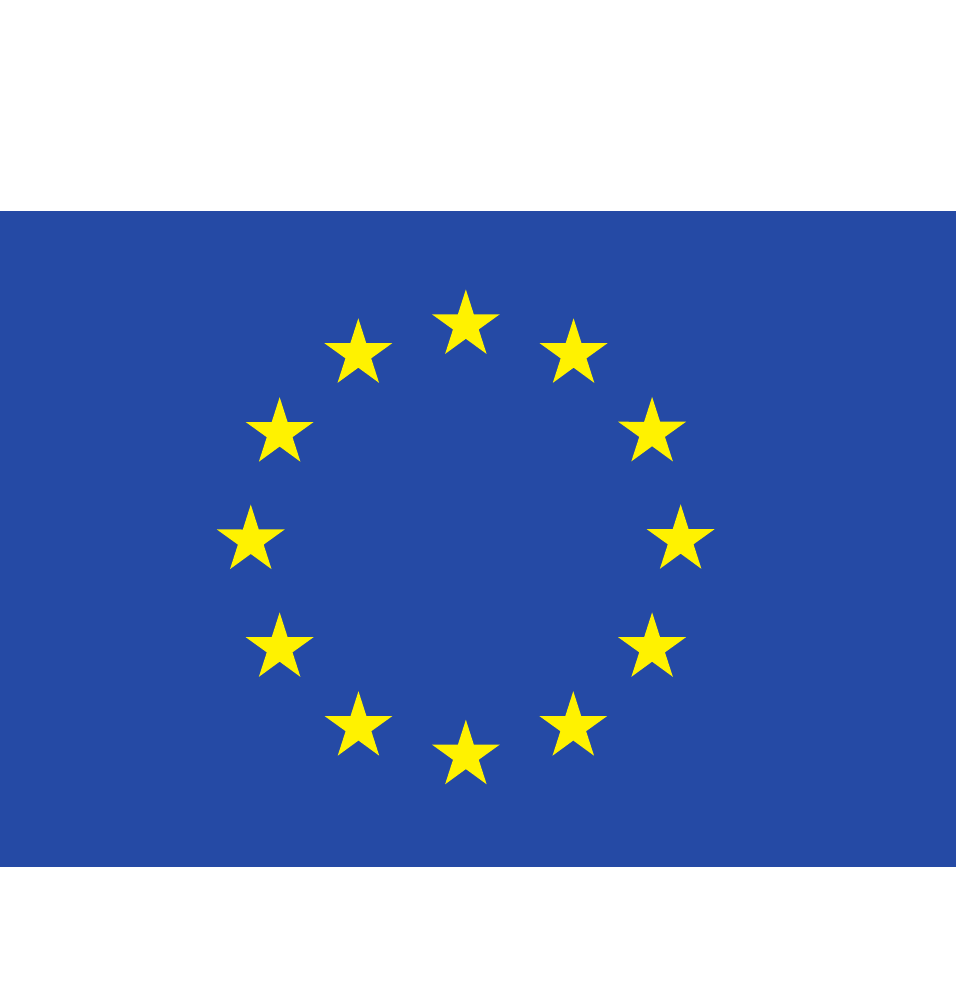} \end{minipage} \hspace{-2.5cm} \begin{minipage}[l][1cm]{0.8\textwidth}
		This project has received funding from the European Union's Horizon 2020 research and innovation programme under the Marie Sk\l{}odowska-Curie grant agreement No 734922.
	\end{minipage}
}

\begin{abstract}\noindent
An \emph{antimagic labeling} of a connected graph $G$ is a bijection from the set of edges $E(G)$ to $\{1,2,\dots,|E(G)|\}$ such that all vertex sums are pairwise distinct, where the \emph{vertex sum} at vertex $v$ is the sum of the labels assigned to edges incident to $v$. A graph is called \emph{antimagic} if it has an antimagic labeling. In 1990, Hartsfield and Ringel conjectured that every simple connected graph other than $K_2$ is antimagic; however the conjecture remains open, even for trees. In this note we prove that trees whose vertices of even degree induce a path are antimagic, extending a result given by Liang, Wong, and Zhu [\emph{Discrete Math.} 331 (2014) 9--14].
\end{abstract}

\section{Introduction}\label{sec:int}

All graphs considered in this work are finite, undirected and simple. Given a graph $G=(V(G),E(G))$ and a vertex $v\in V(G)$, we denote by $E_G(v)$ the set of edges incident to $v$ and by $d_G(v)=|E_G(v)|$, the degree of $v$ in $G$. We will just write $E(v)$ and $d(v)$ when $G$ is clear from context. A tree is a connected and acyclic graph, and a forest is a disjoint union of trees. Undefined terms in this work can be found in~\cite{ChLZ}.

An \emph{(edge) labeling} of a graph $G$ is a mapping from $E(G)$ to the set of nonnegative integers. A labeling $\phi$ of a connected graph $G$ is called \emph{antimagic} if it is a bijection $\phi : E(G)\rightarrow \{1,2,\dots,|E(G)|\}$ such that all vertex sums are pairwise distinct, where the \emph{vertex sum} $s(v)$ at vertex $v\in V(G)$ is defined as $s(v)=\sum_{e\in E(v)}\phi(e)$. A graph is called \emph{antimagic} if it has an antimagic labeling. 

In 1990, Hartsfield and Ringel~\cite{HR} conjectured that every simple connected graph other than $K_2$ is antimagic. The conjecture has received much attention (see~\cite{G}); but it is widely open in general, even for trees. Nevertheless, several classes of trees are known to be antimagic (see~\cite{CCLP,DL,HR, KLR,LWZ,LMS,LMST,Sh}).

Given a tree $T$, $V_{even}(T)$ (resp. $V_{odd}(T)$) denotes the set of even (resp. odd) degree vertices of $T$. Regarding trees such that $V_{even}$ induces a path, Liang, Wong, and Zhu~\cite{LWZ} proved the following two theorems.

\begin{theorem}{\rm \cite{LWZ}}\label{Even degree odd}
If $T$ is a tree such that $V_{even}(T)$ induces a path and $\vert V_{even}(T)\vert$ is odd, then $T$ is antimagic.
\end{theorem}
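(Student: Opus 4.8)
The plan is to exploit the rigid structure forced by the hypothesis. Since $V_{even}(T)$ induces a path $P = u_1 u_2 \cdots u_m$, every vertex of $T$ lying off $P$ has odd degree; hence $T$ is obtained from $P$ by attaching, at each $u_i$, a forest all of whose vertices have odd degree (in particular none of these hanging trees contains a vertex of degree $2$, since the only degree-$2$ vertices of $T$ lie on $P$). I would first root $T$ at the path endpoint $u_1$ and orient every edge towards its parent, so that each non-root vertex $v$ owns a unique edge $e_v$ joining it to its parent. The vertex sum then splits as $s(v) = \phi(e_v) + \sum_{w \,:\, w \text{ a child of } v} \phi(e_w)$, which makes the sums controllable from the leaves upward.

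The core idea is a two-phase assignment of the labels $\{1,\dots,n\}$, where $n = |E(T)|$. First I would commit the smallest labels to the edges of the forests hanging off $P$, so that the odd-degree vertices receive comparatively small and pairwise distinct sums; the leaves cost nothing here, since a leaf's sum equals its single edge label and is automatically distinct from the others. Second, I would reserve the largest labels for the path edges $e_1,\dots,e_{m-1}$ together with the parent edges at the path vertices, distributing them so that every even-degree vertex $u_i$ receives a large sum and these sums are mutually distinct. Distinctness within each class should follow by processing the vertices in a fixed linear order and always keeping enough unused large labels to push the current sum clear of all previously fixed ones.

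The delicate point is guaranteeing that no even-degree sum coincides with an odd-degree sum, and this is where the parity of $m$ enters. A degree-$2$ vertex on $P$ sees only two labels, so it cannot simply be forced above every odd-degree sum; instead I would control parities. Writing $f(e)$ for the parity of $\phi(e)$, the parity of $s(u_i)$ is the number of odd labels incident to $u_i$, and the parities of consecutive path vertices are coupled through the shared edges $e_{i-1},e_i$, forming a chain of $m$ vertices and $m-1$ couplings. I expect that the admissible parity pattern along $P$ telescopes, and that $m$ being \emph{odd} is exactly what makes the endpoint conditions consistent, so that the parities of the path sums can be set (via the choice of which large labels are odd) to separate the even-degree sums from the odd-degree ones; when $m$ is even this closing-up fails, which is why the even case is treated separately by Liang, Wong, and Zhu.

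I anticipate the main obstacle to be the simultaneous bookkeeping in the second phase: after the small labels are spent on the hanging forests, one must show that the remaining large labels can be spread over $P$ and its incident parent edges so that the even-degree sums are distinct, are separated from the odd-degree sums, and use each parity in exactly the quantity the labels $\{1,\dots,n\}$ supply. I would discharge this by an explicit counting argument tracking, in terms of the degrees $d(u_i)$ and the sizes of the hanging forests, how much slack remains at each step, and then verifying the inequalities that keep the greedy choice from ever getting stuck.
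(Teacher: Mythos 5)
First, note that the paper does not actually prove Theorem~\ref{Even degree odd}: it is quoted from Liang, Wong, and Zhu, and the paper only proves the companion even case (Theorem~\ref{V even path}). So the relevant comparison is with the method of that proof, which is also the method of \cite{LWZ} for the odd case, and your proposal does not follow it; more importantly, as it stands the proposal has gaps that I do not think can be repaired along the lines you sketch.

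The central difficulty you correctly identify is separating the sums at even-degree vertices from those at odd-degree vertices, but the parity mechanism you propose for this cannot work. For a leaf $v$, $s(v)=\phi(e_v)$, so the parity of $s(v)$ is forced to be the parity of its unique label; since only $\lceil n/2\rceil$ of the labels $1,\dots,n$ are odd, any tree with more than $\lceil n/2\rceil$ leaves already makes it impossible to put all odd-degree sums into one parity class. The star $K_{1,2k}$ is an instance of your theorem ($V_{even}$ is the single center, a path with $m=1$ odd): its $2k$ leaves have sums $1,\dots,2k$, hitting both parities, so no choice of ``which large labels are odd'' separates the classes. Your analysis of parity couplings only along $P$ ignores that the same constraint propagates through every hanging forest. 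The other load-bearing step --- ``processing the vertices in a fixed linear order and always keeping enough unused large labels to push the current sum clear'' --- is also unsubstantiated: a degree-$2$ vertex of $P$ sees exactly two labels and cannot be pushed anywhere, and a high-degree odd vertex deep in a hanging forest, summing many ``small'' labels, can easily collide with a path vertex summing two ``large'' ones, so the small/large dichotomy does not by itself keep the classes apart.

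The idea that actually closes these gaps (both in \cite{LWZ} and in the paper's proof of Theorem~\ref{V even path}) is arithmetic modulo $m+2$ rather than parity. One labels the induced path so that the residues of the path-vertex sums occupy a prescribed set disjoint from an interval $[a,b]$ with $a+b=m+2$, and then exploits the structural fact you half-observe but do not use: every vertex outside the path has odd degree, so in the rooted forest $T-E(P)$ every vertex has an \emph{even} number of children. This lets one label the outgoing edges at each such vertex in pairs $\{t,\,m+2-t\}$, which forces $s(w)\equiv\phi(e_w)\pmod{m+2}$ for every off-path vertex $w$; distinctness then follows because $\phi$ is a bijection, and the two residue classes are disjoint by construction. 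The role of the parity of $|V_{even}(T)|$ is confined to whether the path labeling can be arranged with no repeated residue at all, not to any global parity of the vertex sums. Without the pairing idea and the modular bookkeeping, I do not see how to complete your plan.
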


\begin{theorem}{\rm \cite{LWZ}}\label{Even degree even}
Let $T$ be a tree such that $V_{even}(T)$ induces a path of length $2p$, $(v_1,\dots ,v_{2p})$. Let  $v_0$ (resp. $v_{2p+1}$) be a neighbor of $v_1$ (resp. $v_{2p}$) different from $v_2$ (resp. $v_{2p-1}$). If $d(v_p)\neq d(v_{2p+1})+1$ or $d(v_{p+1})\neq d(v_0)+1$, then $T$ is antimagic.
\end{theorem}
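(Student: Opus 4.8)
The plan is to give a direct labeling construction that mimics the proof of Theorem~\ref{Even degree odd} but is forced, by the even parity of $|V_{even}(T)|$, to leave exactly one potential collision of vertex sums; the hypothesis will then guarantee that, after possibly reflecting the construction, this collision does not occur. Throughout, write $m=|E(T)|$, let $P=(v_1,\dots,v_{2p})$ be the spine of even-degree vertices, and let $Q=(v_0,v_1,\dots,v_{2p},v_{2p+1})$ be the augmented path obtained by appending the prescribed odd-degree neighbours $v_0,v_{2p+1}$; denote by $f_i$ the edge $v_iv_{i+1}$ for $0\le i\le 2p$, so that $Q$ has the $2p+1$ edges $f_0,\dots,f_{2p}$.

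First I would fix the skeleton. Since every vertex off $P$ has odd degree, deleting the edges of $Q$ splits $T$ into pendant subtrees (branches), one rooted at each of $v_0,v_1,\dots,v_{2p},v_{2p+1}$, in which all $T$-degrees are odd. I would isolate a \emph{branch lemma}: if a branch is assigned a block of consecutive labels, its non-root vertices can be made to have pairwise distinct vertex sums lying in a controlled interval, the root's sum being completed separately by the large labels on its edges toward $Q$. This is the natural inductive ingredient, and I would prove it by induction on the size of the branch, peeling off leaves. Globally, I would then reserve the $2p+1$ largest labels $m,m-1,\dots,m-2p$ for the edges of $Q$, placing them monotonically along $Q$, and distribute the remaining labels $\{1,\dots,m-2p-1\}$ among the branches in disjoint consecutive blocks, each block labelled via the branch lemma. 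Every spine vertex $v_i$ ($1\le i\le 2p$) then carries two large labels, while the end vertices $v_0,v_{2p+1}$ carry only one; choosing the monotone order on $Q$ and the branch windows so that $s(v_1),\dots,s(v_{2p})$ are separated from one another and dominate all branch sums is the heart of the bookkeeping.

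The crux, and the point where the hypothesis enters, is that with an even number of spine vertices this scheme cannot be made strictly monotone across all of $Q$: the single large label at each end vertex places $s(v_0)$ and $s(v_{2p+1})$ in the middle of the spine's sum-range, and a direct computation shows that the only vertex sums which can still coincide are the two ``cross-central'' pairs, namely $s(v_{p+1})$ with $s(v_0)$ and $s(v_p)$ with $s(v_{2p+1})$. After cancelling the common large labels, each such equality reduces to a single degree identity: $s(v_{p+1})=s(v_0)$ forces $d(v_{p+1})=d(v_0)+1$, and $s(v_p)=s(v_{2p+1})$ forces $d(v_p)=d(v_{2p+1})+1$, the ``$+1$'' being exactly the discrepancy produced by an end vertex carrying one fewer large label than a central spine vertex. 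Running the mirror-image construction (reading $Q$ from the opposite end, so that $v_i\leftrightarrow v_{2p+1-i}$) interchanges the two risky pairs, hence one labeling is antimagic unless $d(v_{p+1})=d(v_0)+1$, and the other unless $d(v_p)=d(v_{2p+1})+1$. Since the hypothesis asserts that at least one of these equalities fails, at least one of the two labelings has pairwise distinct vertex sums, and $T$ is antimagic.

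I expect the main obstacle to be the middle step: verifying that the monotone placement on $Q$ together with the branch windows genuinely separates all spine and branch sums, so that the \emph{only} surviving ambiguity is the single cross-central pair. Pinning down the exact label ranges, controlling how a high-degree spine vertex's branch contribution interacts with its two large spine-edge labels, and confirming that no branch sum sneaks into the spine range, is where the real work lies; once that is done, the reflection argument together with the disjunctive hypothesis is a short finish.
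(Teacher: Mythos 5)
A preliminary remark: the paper does not prove this statement itself --- it is quoted from Liang, Wong, and Zhu --- but the proof of Theorem~\ref{V even path} in Section~\ref{sec:result} uses the same machinery, so it shows what the actual mechanism is. Your high-level plan has the right shape: there is indeed a single unavoidable collision between an end vertex and a near-central spine vertex, it does reduce to a degree identity of the form $d(v_{p+1})=d(v_0)+1$, and playing the two identities off against each other by reflecting the construction is exactly how the disjunctive hypothesis is used.

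The gap is in the construction you propose to realize this; it is not one that makes your two key claims true. The real mechanism compares vertex sums modulo $m+2$ and has two ingredients your scheme lacks. First, the path is labeled with \emph{alternating} small and large labels (roughly $1,\,m-p+1,\,2,\,m-p+2,\dots$), not with the $2p+1$ largest labels placed monotonically; this is what confines the residues of the path-vertex sums to two short, controlled intervals. Second --- and this is the decisive point --- the edges leaving the path are labeled in pairs summing to $m+2$ at each vertex, which is possible because every vertex of $T$ off the path has odd degree, hence every vertex of the rooted branch forest has an even number of children. Consequently every non-path vertex's sum is congruent mod $m+2$ to the label of its unique incoming edge (so these sums are automatically pairwise distinct and disjoint from the path residues), and every path vertex's branch contribution is exactly half the number of its branch edges times $(m+2)$, a multiple of $m+2$ determined by its degree alone. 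That last fact is precisely why the one surviving collision $s(v_0)=s(v_{p+1})$ collapses to the clean identity $d(v_{p+1})=d(v_0)+1$. Under your scheme --- consecutive blocks handed to branches and a leaf-peeling branch lemma --- the branch contribution at a vertex depends on which block it received, a spine vertex of large degree accumulates a branch sum that can dwarf the gaps between consecutive spine labels, and neither the claimed separation of all other sums nor the reduction of the collision to a degree identity goes through. You flag the separation step as ``where the real work lies''; it is in fact the entire content of the proof, and the monotone-plus-blocks construction does not supply it.
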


The aim of this note is to extend Theorem~\ref{Even degree even} to all cases, that is, to prove the antimagicness of trees such that $V_{even}(T)$ induces a path whenever $\vert V_{even}(T)\vert$ is even, obtaining as a consequence that trees whose even-degree vertices induce a path are antimagic.

\section{Constructing an antimagic labeling}\label{sec:result}

In the proof of the next theorem we follow and extend the main idea developed by Liang, Wong, and Zhu in~\cite{LWZ}.
We denote by $[a,b]$ the set of consecutive integers $\{a,a+1,\dots,b\}$, where $a\leq b$.

\begin{theorem}\label{V even path}
If $T$ is a tree such that $V_{even}(T)$ induces a path and $|V_{even}(T)|$ is even, then $T$ is antimagic.
\end{theorem}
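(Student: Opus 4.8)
The plan is to reduce \autoref{V even path} to a single rigid residual case by invoking Theorem~\ref{Even degree even}, and then to settle that case by an explicit labeling obtained by adapting the construction behind Theorem~\ref{Even degree even}. First I would dispose of the degenerate case $V_{even}(T)=\emptyset$, in which every vertex of $T$ has odd degree; this instance is treated separately. Otherwise write the induced path as $(v_1,\dots,v_{2p})$ with $p\ge 1$, and for each endpoint pick an off-path neighbor $v_0$ of $v_1$ and $v_{2p+1}$ of $v_{2p}$. These are exactly the neighbors of $v_1$ and $v_{2p}$ lying outside the path, and they have odd degree. By Theorem~\ref{Even degree even}, $T$ is antimagic as soon as some admissible choice satisfies $d(v_p)\ne d(v_{2p+1})+1$ or $d(v_{p+1})\ne d(v_0)+1$. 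Hence the only situation left to treat is the one in which both equalities hold for \emph{every} admissible choice; since $v_0$ ranges over all off-path neighbors of $v_1$ and $v_{2p+1}$ over all those of $v_{2p}$, this forces the rigid degree pattern in which every off-path neighbor of $v_1$ has degree $d(v_{p+1})-1$ and every off-path neighbor of $v_{2p}$ has degree $d(v_p)-1$.

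For this residual case I would reproduce the labeling scheme of~\cite{LWZ}. The idea is to delete the edges of the extended path $v_0v_1\cdots v_{2p}v_{2p+1}$, leaving a forest whose components are the subtrees hanging off the path vertices; every vertex of these hanging subtrees has odd degree in $T$. I would first label the off-path edges so that all odd-degree vertices receive pairwise distinct sums confined to a controlled lower range, treating each hanging subtree recursively, and then assign the remaining labels to the path edges $v_iv_{i+1}$ and the two end edges so that the sums $s(v_1),\dots,s(v_{2p})$ of the even-degree vertices form a monotone sequence occupying a disjoint upper range. With the two ranges separated and each class internally monotone, all sums are forced to be distinct, except possibly at the interface where the sums of $v_p$ and $v_{p+1}$ meet those coming from the ends $v_0$ and $v_{2p+1}$.

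The hard part will be exactly this interface, since by construction the two potential coincidences are $s(v_p)=s(v_{2p+1})$ and $s(v_{p+1})=s(v_0)$, and in the residual case the degree equalities make both of them actually occur. To break them I would perform a single transposition of two labels on suitably chosen incident edges, shifting one offending sum by a controlled amount $\pm 1$ while leaving the global range structure intact; the rigid equal-degree pattern isolated in the first step is what guarantees that such a local swap is available and that it resolves the collision. The delicate point, and the main obstacle of the whole argument, is to verify that this repair does not create a new coincidence: the transposition perturbs the sums of the two vertices incident to the swapped edges, so one must check the new values against both the neighboring path sums and the off-path sums. Here the parity of the $\pm 1$ shift together with the separation of the two ranges established above are the tools I expect to make the verification go through, completing the proof of \autoref{V even path} together with Theorems~\ref{Even degree odd} and~\ref{Even degree even}.
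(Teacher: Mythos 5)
Your opening reduction is legitimate and is genuinely different from the paper: the paper never invokes Theorem~\ref{Even degree even}, but instead builds an antimagic labeling of $T$ from scratch for all trees with $|V_{even}(T)|=2p$, $p\ge 1$ (disposing of $p=0$ by citing that trees with no degree-$2$ vertices are antimagic). Restricting to the residual case where $d(v_0)=d(v_{p+1})-1$ and $d(v_{2p+1})=d(v_p)-1$ for \emph{every} admissible choice of $v_0$ and $v_{2p+1}$ is a valid first move and does isolate a rigid degree pattern.

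The genuine gap is in the second half. Your entire treatment of the residual case rests on the claim that ``a single transposition of two labels on suitably chosen incident edges'' shifts one offending sum by $\pm1$ and removes the collisions without creating new ones; but you never identify the edges, never explain why the shift is $\pm1$, and never show how one transposition can kill \emph{both} coincidences $s(v_p)=s(v_{2p+1})$ and $s(v_{p+1})=s(v_0)$, which by your own setup occur simultaneously in the residual case. You explicitly flag the verification that no new coincidence appears as the main obstacle and then only say you ``expect'' it to go through --- that is precisely the step that constitutes the theorem, and it is exactly the part that Liang, Wong, and Zhu could not close with a local repair (otherwise Theorem~\ref{Even degree even} would not carry its degree hypothesis). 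The paper's proof shows how much more is actually needed: it compares all sums modulo $m+2$, labels the path with a block of swaps $e_{2i-1}\leftrightarrow e_{2i}$ for $1\le i\le\lfloor(p-1)/2\rfloor$ (not a single transposition), and in the hardest subcase ($p$ even, neither $v_0$ nor $v_{2p+1}$ a leaf) must enlarge the labeled subtree along a maximal path $P'$ of odd-degree vertices together with pendant neighbors $y_0,\dots,y_{\ell-1}$ before the collision can be converted into a harmless one between a leaf and a non-leaf; the hanging subtrees are then labeled in pairs $\{t,\,m+2-t\}$ of outgoing edges so that path-vertex sums are preserved modulo $m+2$. None of this machinery (nor any substitute for it) is present in your proposal, and the rigid degree pattern you isolate is never actually used. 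As written, the argument does not prove the residual case, and hence does not prove the theorem.
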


\begin{proof}

It is known that trees without vertices of degree $2$ are antimagic~\cite{KLR,LWZ}, hence we may assume $|V_{even}(T)|=2p$, with $p\geq 1$. Let $P=(v_0,v_1,v_2,\dots,v_{2p},v_{2p+1})$ be a path induced by $V_{even}(T)$ extended with endpoints in $V_{odd}(T)$, that is, $V_{even}(T)=\{v_1,\dots,v_{2p}\}$ and $\{v_0,v_{2p+1}\}\subseteq V_{odd}(T)$. For every $1\le i\le 2p+1$, we denote by $e_{i}$ the edge $v_{i-1}v_i$.

We prove the theorem by constructing an antimagic labeling $\phi$ of $T$ in two steps. The first step produces a labeling of a subtree of $T$ containing the path $P$ and satisfying a particular additional condition. This labeling will be extended to an antimagic labeling of $T$ at the second step.

Let $m=|E(T)|$. We will use the residues modulo $m+2$ to compare vertex sums: since vertex sums are distinct if they are distinct  modulo $m+2$, it is enough to compare vertex sums whenever they are equal modulo $m+2$ in order to check that they all are pairwise distinct.

For each (not necessarily connected) subgraph $T'$ of $T$, we set
$L_{\phi}(T')=\{\phi(e) \,:\, e\in E(T')\}$ and
$s_{T'}(v)=\sum_{e\in E_{T'}(v)} \phi(e)$ for every $v \in V(T')$ such that $d_{T'}(v)\geq 1$.
Obviously, if $T'=T$, then $s_{T'}=s$.
The set of all vertex sums modulo $m+2$ in $T'$ will be denoted by $R_{m+2}(T')$, that is,
$$R_{m+2}(T')=\{s_{T'}(v)\smodul {m+2}  \,:\, v\in V(T') \text{ and } d_{T'}(v) \geq 1\}\subseteq\{0,1,\dots,m+1\}.$$
\medskip

\noindent {\bf STEP I.}
The labeling of the tree $T_1$ constructed at this step will satisfy the following condition: all vertex sums in $T_1$ will be pairwise distinct modulo $m+2$ with at most one exception; moreover, if the vertex sums are equal modulo $m+2$ for a pair of vertices, then exactly one of them will be a leaf in $T$ and the vertex sums in $T_1$ for both vertices in the pair will be different.

As a starting point, let $T_1:=P$ and define
\begin{center}
\boxed{
	$$
	\begin{array}{lr}
	\phi(e_{2i+1}) := i+1, &\hbox { for }0\le i\le p;\\
	\phi(e_{2i}) := m-p+i, &\hbox { for }1\le i\le p.
	\end{array}
	$$}
\end{center}

Hence,
\begin{equation}\label{Eq1}
L_{\phi}(T_1)=[1,p+1]\cup [m-p+1,m].
\end{equation}
Moreover, $s_{T_1}(v_0)=1$, $s_{T_1}(v_{2p+1})=p+1$, and for $1\leq i\leq 2p$, $s_{T_1}(v_i)=m-p+i+1$. Next, we calculate  the set $R_{m+2}(T_1)$ according to the values of $p$ and to the degrees of $v_0$ and $v_{2p+1}$.
\medskip

\noindent\emph{Case 1.} $p=1$. In this case, $|V(T_1)|=4$ and
\begin{equation}\label{Eq2}
R_{m+2}(T_1)=\{0,1,2,m+1\}.
\end{equation}
Hence, vertex sums at the vertices of $T_1$ are distinct.
\medskip

\noindent\emph{Case 2.} $p>1$. In such a case,
\begin{equation}\label{Eq3}
R_{m+2}(T_1)=\Big( [0,p+1]\setminus \{p\} \Big)\cup  [m-p+2,m+1],
\end{equation}
and, only the residues of vertex sums at vertices $v_0$ and $v_{p+2}$ are equal.
In fact, we have that $s_{T_1}(v_0)\equiv 1 \equiv s_{T_1}(v_{p+2}) \pmod {m+2}$ (see an example in Figures~\ref{fig_ant1}(a) and~\ref{fig_ant2}(a)).

\begin{figure}[ht!]
\centering
\includegraphics[width=\textwidth]{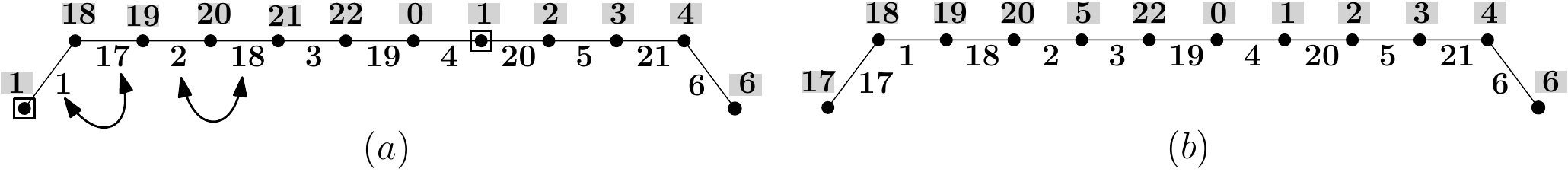}
\caption{Labeling of $T_1$ for $p=5$ and $m=21$; (a) before the swaps, and (b) after the swaps. The shadowed label at each vertex is the vertex sum modulo $23$. Squared vertices have the same label.}\label{fig_ant1}
\end{figure}

\begin{figure}[ht!]
\centering
\includegraphics[width=\textwidth]{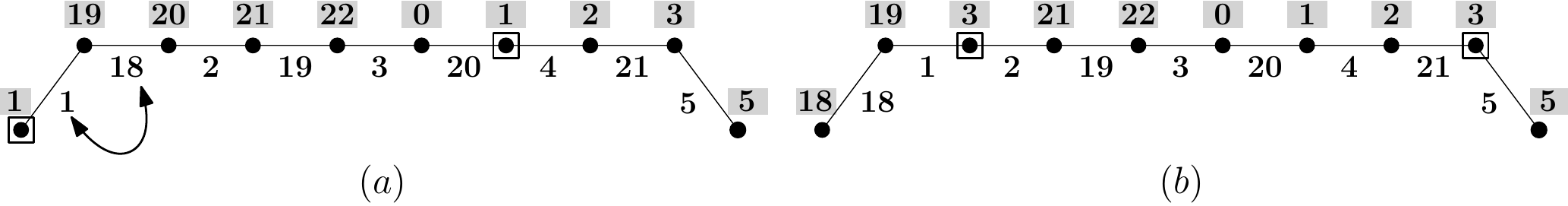}
\caption{Labeling of $T_1$ for $p=4$ and $m=21$; (a) before the swap, and (b) after the swap. The shadowed label at each vertex is the vertex sum modulo $23$. Squared vertices have the same label.}
\label{fig_ant2}
\end{figure}

Now we distinguish two subcases.

\begin{enumerate}[{\textrm 2}.1.]
\item
\emph{At least one of the vertices $v_0$ or $v_{2p+1}$ is a leaf in $T$.} Notice that by properly relabeling the vertices of $T_1$, we may assume $d_T(v_{0})=1$. Then,
 $s_{T_1}(v_0) \equiv 1 \equiv  s_{T_1}(v_{p+2})  \pmod {m+2}$, but
 $s_{T_1}(v_0)=1<m+3=s_{T_1}(v_{p+2})$.

\item
\emph{Neither $v_0$ nor $v_{2p+1}$ are leaves in $T$.} In this case; for $1\le i\le \lfloor \small{\frac{p-1}2} \rfloor$, we swap the labels of the edges $e_{2i-1}$ and $e_{2i}$, that is,

\begin{center}
\boxed{		$$	
		\begin{array}{lr}
		\phi(e_{2i-1}):=  m-p+i, &\hbox{ for }1\le i\le \lfloor \small{\frac{p-1}2} \rfloor;\\
		\phi(e_{2i}):= i,  &\hbox{ for } 1\le i\le \lfloor \frac{p-1}2 \rfloor.
		\end{array}
		$$
}\end{center}

Notice that the endpoints of the subpath of $T_1$ involved in the swaps are $v_0$ and $v_k$, where $k=p-2$, if $p$ is even; and $k=p-1$, if $p$ is odd.
After the swaps, $s_{T_1}(v_0)=m-p+1$; also it can be easily checked that $s_{T_1}(v_k)=p-1$, if $p$ is even; and $s_{T_1}(v_k)=p$, if $p$ is odd;
and the vertex sum at any other vertex in $T_1$ remains unchanged.
We distinguish cases depending on the parity of $p$.
\begin{enumerate}[(a)]
	\item \emph{$p$ odd}. In this case we have that $s_{T_1}(v_k)=p$, implying that the residues modulo $m+2$ at the vertices of $T_1$ are pairwise distinct. Concretely,
	\begin{equation}\label{Eq4}
	R_{m+2}(T_1)=[0,p+1]\cup \Big([m-p+1,m+1]\setminus \{m\} \Big).
	\end{equation}
	\item \emph{$p$ even}. Then, $s_{T_1}(v_k)=p-1\equiv s_{T_1}(v_{2p})\pmod {m+2}$, and thus
\begin{equation}\label{Eq5}
R_{m+2}(T_1)=\Big([0,p+1]\setminus \{ p \} \Big) \cup \Big([m-p+1,m+1]\setminus \{m-1\}\Big).
\end{equation}
Notice that only $s_{T_1}(v_k)$ and $s_{T_1}(v_{2p})$ have the same residue  in $T_1$
(see an example in Figure~\ref{fig_ant2}(b)).
Now, let $x_0=v_{2p+1}$ and let $P'=(x_0,x_1,\dots ,x_{\ell})$ be a maximal subpath of $T$ starting at $x_0$ and with vertices in $V_{odd}(T)$. Observe that, in such a case,
$x_{\ell}$ is a leaf in $T$ and there exist vertices $y_0,\dots ,y_{\ell-1}\in V_{odd}(T)$ such that $x_iy_i\in E(T)$ (see an example in Figure~\ref{fig_ant3}).
We update $T_1$ as the tree induced by the set of vertices of the paths $P$ and $P'$, and $\{ y_0,\dots ,y_{\ell-1}\}$:
$$ T_1 := T [ \{ v_0,\dots ,v_{2p}\}\cup \{ x_0,\dots ,x_{\ell}\} \cup \{ y_0,\dots ,y_{\ell-1}\} ].  $$

\begin{figure}[ht]
\centering
\includegraphics[width=0.7\textwidth]{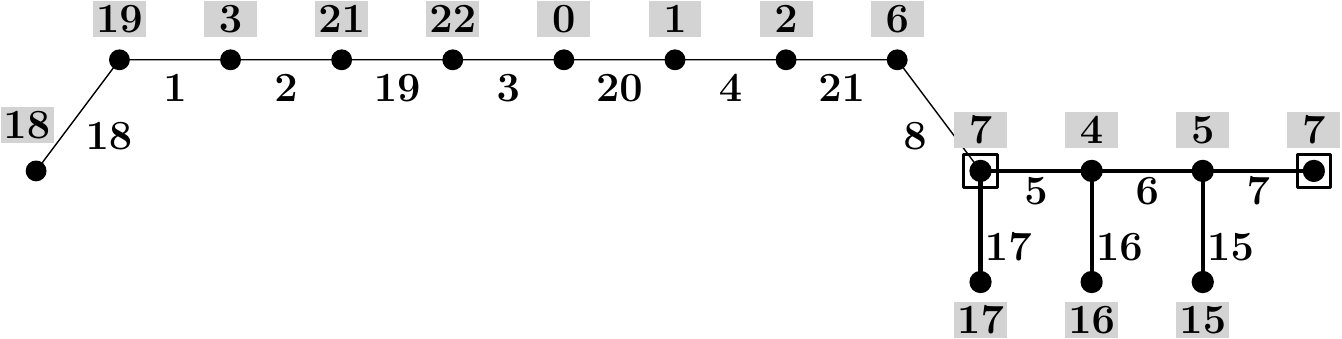}
\caption{Labeling of $T_1$ with $p=4$, $\ell=3$, and $m=21$. The shadowed label at each vertex is the vertex sum modulo $23$. Squared vertices have the same label.}\label{fig_ant3}
\end{figure}

We  define the labels of the new edges and update the label of the edge $e_{2p+1}$ as follows:
\begin{center}
	\boxed{
$$	\begin{array}{lr}
		\phi (e_{2p+1}):=p+\ell +1;&\\
		\phi (x_ix_{i+1}):=p+i+1, &\hbox{ for }0\le i\le \ell -1;\\
		\phi (x_iy_{i}):=m-p-i,&\hbox{ for }0\le i\le \ell -1.
	\end{array}$$}
\end{center}

Thus, we have
\begin{equation}\label{Eq6}
L_\phi(T_1)=[1,p+\ell+1]\cup [m-p-\ell+1,m],
\end{equation}
and

\begin{align*}
s_{T_1} &(x_i)=m+p+i+1\equiv {p+i-1}\pmod {m+2}, \text{ for } 1\leq i \leq \ell-1,\\
s_{T_1} &(v_{2p})=m+p+\ell+1\equiv {p+\ell-1}\pmod {m+2},\\
s_{T_1} &(x_0)=m+p+\ell+2\equiv {p+\ell}\pmod {m+2},\\
s_{T_1} &(x_\ell)=p+\ell\equiv {p+\ell}\pmod {m+2},\\
s_{T_1} &(y_i)=m-p-i\equiv {m-p-i}\pmod {m+2}, \text{ for } 0\leq i \leq \ell-1.
\end{align*}
Therefore, taking into account Equality \eqref{Eq5},
\begin{equation}\label{Eq7}
R_{m+2}(T_1)=[0,p+\ell]\cup \Big([m-p-\ell+1,m+1]\setminus \{m-1\}\Big)
\end{equation}
and only $s_{T_1}(x_0)$ and  $s_{T_1}(x_\ell)$ have the same residue, concretely $p+\ell$.
However, $s_{T_1}(x_0)=m+p+\ell+2 >p+\ell=s_{T_1}(x_\ell)$, and hence all vertex sums in $T_1$ are different (see an example in Figure~\ref{fig_ant3}).
\vspace{0.1cm}
\end{enumerate}
\end{enumerate}

Notice that, in each of the above cases, $|L_\phi(T_1)|=|E(T_1)|$. Hence $\phi$, restricted to $E(T_1)$, is a bijection from $E(T_1)$ to $L_\phi(T_1)$.
\medskip

\noindent
{\bf STEP II.} Now, let $T_2$ be the forest obtained by removing all the edges of $T_1$. Each component of $T_2$ has exactly one vertex in $T_1$. Therefore, if $T_2(v)$ denotes the component of $T_2$ containing $v$, $$T_2:=T-E(T_1)=\bigcup_{v\in V(T_1)} T_2(v).$$

Clearly, $T_{2}(v)$ can be viewed as a directed rooted tree with root at $v$, where every edge is directed away from the root. Moreover, since every vertex of  $T_2(v)$ different from $v$ has odd degree in $T$, each vertex  in $T_2(v)$ has an even number of children in this rooted tree and, therefore, $|E(T_2(v))|$ is even for every $v\in V(T_1)$. Hence, $|E(T_2)|$ is even.
If we set $\ell =0$ whenever $T_1=P$, then, by Equalities~\eqref{Eq1} and~\eqref{Eq6}, the available labels for the edges of $T_2$ are
\begin{equation}\label{Eq8}
L_\phi(T_2) :=
[p+\ell+2,m-p-\ell].
\end{equation}
As $L_\phi(T_2)=[a,b]$, where $a+b=m+2$, and each $w\in V(T_2)$ has an even number of children, we can label the edges of $T_2$ with integers in $L_\phi(T_2)$ fulfilling this additional condition: if a vertex $w$ has an outgoing edge with label $t$ in the corresponding rooted tree of $T_2$, then $w$ has another outgoing edge with label $m+2-t$ (see an example in Figure~\ref{fig_ant4}).
Concretely, if $|E(T_2)|=2r$  for some integer $r$, then we set $E(T_2)=E_1\cup \dots \cup E_r$, where $( E_1,\dots ,E_r)$ is a partition of $E(T_2)$ such that $E_i=\{f_{i}^1,f_{i}^2\}$ contains exactly two outgoing edges from the same vertex in some rooted tree of the forest $T_2$, and we label the edges of $T_2$ in the following way:

\begin{center}
\boxed{$$ \begin{array}{lr}
	\phi(f_{i}^1):=p+\ell +2 + (i-1),&\hbox{ for }1\le i\le r;\\
	\phi(f_{i}^2):= m-p-\ell -(i-1),&\hbox{ for }1\le i\le r.
	\end{array}$$}
\end{center}

\begin{figure}[t]
\centering
 \includegraphics[width=0.75\textwidth]{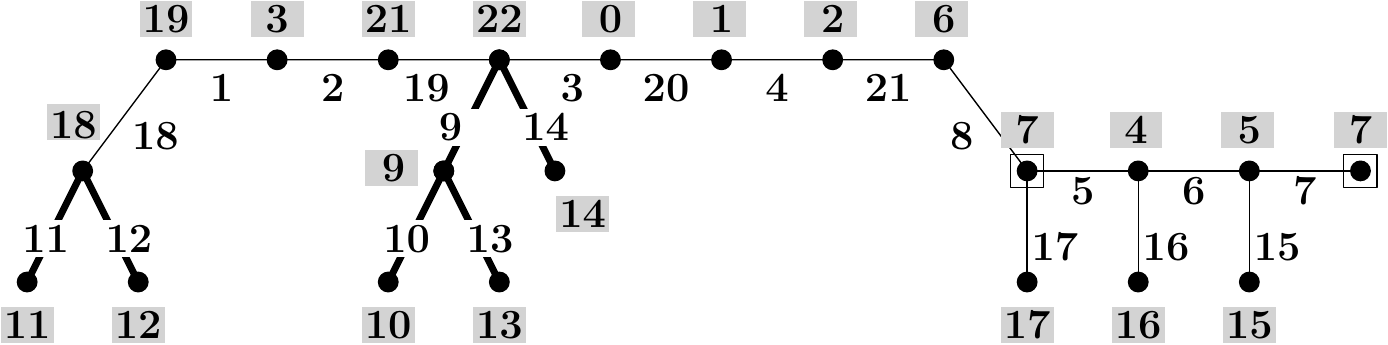}
\caption{An antimagic labeling of a tree with $m=21$. Thicker edges correspond to the forest $T_2$ and are labeled in Step~II (in this example, the forest $T_2$ has two nontrivial components). The shadowed label at each vertex is the vertex sum modulo $23$. Squared vertices have the same label, but different vertex sums.}\label{fig_ant4}
\end{figure}

\smallskip
Clearly, by the previous discussion, the labeling $\phi$ already constructed is a bijection from $E(T)$ to $[1,m]$. Finally, we just need to show that the vertex sums defined by $\phi$ in $T$ are pairwise distinct.

Observe that in $T_2$, the sum of the labels of the outgoing edges of $v$ is a multiple of $m+2$. Thus, the following two conditions hold.

\begin{enumerate}[1.]

\item\label{con1} For every $v\in V(T_1)$, $s(v)\equiv s_{T_1}(v)\pmod {m+2}$.

\item\label{con2} For every $v\in V(T)\setminus V(T_1)$, $s(v)\equiv \phi(f)\pmod {m+2}$, where $f$ is the incoming edge of $v$ in $T_2$.
\end{enumerate}

 Let $u,v\in V(T)$. We consider the following cases:
\begin{enumerate}[(a)]

\item $u,v\in V(T)\setminus V(T_1)$.
Since $\phi: E(T)\rightarrow [1,m]$ is a bijection, Condition~\ref{con2} implies that $s(u)\neq s(v)$.

\item $u,v\in V(T_1)$.
If $s_{T_1}(u)\not\equiv s_{T_1}(v)\pmod {m+2}$, then by Condition~\ref{con1} we have that $s(u)\neq s(v)$. Otherwise, as we have seen in Cases 2.1 and 2.2(b) of Step~I, one of these vertices, say $u$, is a leaf in $T$ and $s_{T_1}(u)<s_{T_1}(v)$. Therefore, we have that $s(u)=s_{T_1}(u)< s_{T_1}(v)\leq s(v)$, as we wanted to prove.

\item One of the vertices belongs to $V(T_1)$ and the other to $V(T)\setminus V(T_1)$. We can assume without loss of generality that $u\in V(T_1)$ and $v\in V(T)\setminus V(T_1)$. By Condition~\ref{con1}, $s(u)\pmod {m+2}\in R_{m+2}(T_1)$. Moreover, by Condition ~\ref{con2}, $s(v)\pmod {m+2}\in L_\phi(T_2)$.
By Equalities~\eqref{Eq2},~\eqref{Eq3},~\eqref{Eq4},~\eqref{Eq7}, and~\eqref{Eq8}, we have $R_{m+2}(T_1)\cap L_\phi(T_2) =\emptyset$. Hence, $s(u)\neq s(v)$.
\end{enumerate}
Thus, the theorem holds.
\end{proof}

The next result follows from Theorems~\ref{Even degree odd} and~\ref{V even path}.

\begin{corollary}
If $T$ is a tree such that $V_{even}(T)$ induces a path, then $T$ is antimagic.
\end{corollary}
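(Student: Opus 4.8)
The plan is to obtain the Corollary as an immediate consequence of the two parity-specific results, splitting on the cardinality of $V_{even}(T)$. The hypothesis of the Corollary, namely that $V_{even}(T)$ induces a path, is exactly the standing assumption shared by Theorem~\ref{Even degree odd} and Theorem~\ref{V even path}. Since $|V_{even}(T)|$ is an integer, it is either odd or even, and these two alternatives are precisely the hypotheses of the two theorems.

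First I would treat the case in which $|V_{even}(T)|$ is odd: here Theorem~\ref{Even degree odd} applies verbatim and gives that $T$ is antimagic. Next I would treat the case in which $|V_{even}(T)|$ is even: here Theorem~\ref{V even path} applies verbatim and again gives that $T$ is antimagic. The boundary instance $|V_{even}(T)|=0$ is subsumed in the even case; it corresponds to a tree having no even-degree vertex, in particular no vertex of degree $2$, so its antimagicness is already guaranteed by the known result for trees without degree-$2$ vertices invoked at the start of the proof of Theorem~\ref{V even path}.

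Because the two cases are mutually exclusive and jointly exhaustive, the argument covers every tree satisfying the hypothesis, and the Corollary follows. I do not expect any genuine obstacle at this stage: the entire combinatorial weight of the result has been discharged in the constructions proving Theorem~\ref{Even degree odd} and Theorem~\ref{V even path}, and what remains is only the observation that the parity of $|V_{even}(T)|$ selects which of the two theorems to invoke. The single point deserving explicit mention is that the two hypotheses partition all admissible values of $|V_{even}(T)|$, so that no configuration is left uncovered.
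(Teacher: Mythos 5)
Your proposal is correct and matches the paper exactly: the corollary is obtained by splitting on the parity of $|V_{even}(T)|$ and invoking Theorem~\ref{Even degree odd} in the odd case and Theorem~\ref{V even path} in the even case. Your extra remark on the boundary instance $|V_{even}(T)|=0$ is a harmless elaboration of the same argument.
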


\noindent{\bf Acknowledgments.}
A. Lozano is supported by the European Research Council (ERC) under the European Union's Horizon 2020 research and innovation programme (grant agreement ERC-2014-CoG 648276 AUTAR); M. Mora is supported by projects Gen. Cat. DGR 2017SGR1336, MINECO MTM2015-63791-R, and H2020-MSCA-RISE project 734922-CONNECT; and C. Seara is supported by projects Gen. Cat. DGR 2017SGR1640, MINECO MTM2015-63791-R, and H2020-MSCA-RISE project 734922-CONNECT.


\begin{thebibliography}{XX}

\bibitem{CCLP} F. H. Chang, P. Chin, W. T. Li, and Z. Pan. The strongly antimagic labelings of double spiders. arXiv:1712.09477 (2017).

\bibitem{ChLZ} G. Chartrand, L. Lesniak,  and P. Zhang. Graphs and Digraphs, fifth edition. CRC Press Boca Raton (2011).

\bibitem{DL}  K. Deng and Y. Li. Caterpillars with maximum degree $3$ are antimagic.
\emph{Discrete Mathematics} 342 (2019) 1799--1801.

\bibitem{G} J. A. Gallian. A dynamic survey of graph labeling. \emph{The Electronic Journal of Combinatorics} (2018) \#DS6.

\bibitem{HR} N. Hartsfield and G. Ringel. Pearls in Graph Theory, Academic Press INC. Boston (1990) (revised version, 1994) 108--109.

\bibitem{KLR} G. Kaplan, A. Lev, and Y. Roditty. On zero-sum partitions and antimagic trees. \emph{Discrete Mathematics} 309 (2009) 2010--2014.

\bibitem{LWZ} Y.-Ch. Liang, T.-L. Wong, and X. Zhu. Anti-magic labeling of trees. \emph{Discrete Mathematics} 331 (2014) 9--14.

\bibitem{LMS} A. Lozano, M. Mora, and C. Seara. Antimagic labelings of caterpillars. \emph{Applied Mathematics and Computation} 347 (2019) 734--740.

\bibitem{LMST} A. Lozano, M. Mora, C. Seara, and J. Tey. Caterpillars are Antimagic. arXiv:1812.06715 (2018).

\bibitem{Sh} J. L. Shang. Spiders are antimagic. \emph{Ars Combinatoria} 118 (2015) 367--372.

\end{thebibliography}
\end{document}